\newcommand{\dom}{\mathop{\mathrm{dom}}\nolimits}
\renewcommand{\phi}{\varphi}
\newcommand{\bF}{\mathbb{F}}
\newcommand{\bL}{\mathbb{L}}
\newcommand{\bQ}{\mathbb{Q}}
\newcommand{\bS}{\mathbb{S}}
\newcommand{\bH}{\mathbb{H}}
\newcommand{\cC}{\mathcal{C}}
\newcommand{\cG}{\mathcal{G}}
\newcommand{\cH}{\mathcal{H}}
\newcommand{\cO}{\mathcal{O}}
\newcommand{\cU}{\mathcal{U}}
\newcommand{\cT}{\mathcal{T}}
\newcommand{\cX}{\mathcal{X}}
\newcommand{\N}{\mathbb{N}}
\newcommand{\Z}{\mathbb{Z}}
\theoremstyle{plain}
\newtheorem{theorem}{Theorem}[section]
\newtheorem{lemma}[theorem]{Lemma}
\newtheorem{corollary}[theorem]{Corollary}
\newtheorem{proposition}[theorem]{Proposition}
\theoremstyle{definition}
\newtheorem{definition}[theorem]{Definition}
\theoremstyle{remark}
\newtheorem{remark}[theorem]{Remark}
\begin{document}

\author{Zakhar Kabluchko}
\address{Zakhar Kabluchko: Institut f\"ur Mathematische Stochastik,
Westf\"alische Wilhelms-Universit\"at M\"unster,
Orl\'eans-Ring 10,
48149 M\"unster, Germany}
\email{zakhar.kabluchko@uni-muenster.de}

\author{Katrin Tent}
\address{Katrin Tent:
Mathematisches Institut und
Institut f\"ur Mathematische Logik und Grundlagenforschung,
Westf\"alische Wilhelms-Universit\"at M\"unster,
Einsteinstr.\ 62,
48149 M\"unster,
Germany}
\email{tent@wwu.de}

\title[]{On weak Fra\"iss\'e limits}

\keywords{Fra\"iss\'e class, universality, homogeneity, comeager set, Baire property}


\subjclass[2010]{Primary: 	03C15; Secondary: 54E52, 60B99}

\begin{abstract}
Using the natural action of $S_\infty$ we show that a countable hereditary  class $\cC$ of finitely generated structures has the joint embedding property (JEP) and the weak amalgamation property (WAP)
if and only if there is a structure $M$ whose isomorphism type is comeager in the space of all countable, infinitely generated structures with age in $\cC$. In this case, $M$ is the weak Fra\"iss\'e limit of $\cC$.

This applies in particular to countable structures with generic automorphisms and recovers a result by Kechris and Rosendal [\textit{Proc.\ Lond.\ Math.\ Soc.,\ 2007}].
\end{abstract}

\maketitle

\section{Introduction}

After writing the note~\cite{kabluchko_tent_comeager1} showing that the class of universal-homogenous structures is comeager in some appropriate space of structures, we noticed that the methods from the paper of Kechris and Rosendal~\cite{KechrisRosendal} easily yield a stronger and more general result applying to weak Fra\"iss\'e classes of finitely generated structures.

Thus, the aim of the present note is to prove that for an arbitrary hereditary class $\cC$ of finitely generated (not necessarily finite) structures, the space of all countable, infinitely generated structures with age in $\cC$ contains a comeager subset of structures isomorphic to some countable structure $M$ if and only if  $\cC$ has the weak Fra\"iss\'e property and $M$ is the weak Fra\"iss\'e limit of $\cC$.   As a corollary, this proves the existence and uniqueness of the weak Fra\"iss\'e limit of a weak Fra\"iss\'e class. We also explain how this in turn reproves the result by Kechris and Rosendal~\cite{KechrisRosendal} on comeager conjugacy classes of automorphisms of such structures in the slightly more general setting of finitely generated structures.

Similar results for classes of finite structures were obtained by Kruckman in his PhD thesis \cite{kruckman} using Banach--Mazur games on posets.


\section{Background and the main result}\label{sec:main}
\subsection{Weak Fra\"ss\'e limits}

Let $L$ be a countable language. For background on model theory we refer to \cite{TZ}.
Here and in what follows, the word `structure' always means `$L$-structure'.

\begin{definition}\label{def:Fraisse} Let $\cC$ be a  countable class of (isomorphism types of) finitely generated structures and assume that $\cC$ is hereditary, that is closed under taking finitely generated substructures.
We say that  $\cC$ is  a  \emph{weak Fra\"iss\'e class} if $\cC$ satisfies
\begin{itemize}
\item[(WAP)] (Weak Amalgamation Property) For any $A\in\cC$ there is some $B\in \cC$ and embedding $e:A\longrightarrow B$ such that for all $B_1, B_2\in\cC$ and embeddings $h_i$ of $B$ into $B_i, i=1,2$, there is some $C\in\cC$ and embeddings $g_i$ of $B_i$ into $C, i=1,2$, such that $g_1\circ h_1\circ e=g_2\circ h_2\circ e$.

\item[(JEP)] (Joint Embedding Property) For any $A, B\in\cC$ there is some $D\in\cC$ containing $A$ and $B$ as a substructure.
\end{itemize}

The class $\cC$ is called a  \emph{Fra\"iss\'e class}  if in condition (WAP) we can choose $B=A$ and $e=id$. The notion of Weak Amalgamation was first studied by Ivanov in \cite{ivanov}.

We call $\cC$ \emph{unbounded} if  any $A\in\cC$ can be embedded as a proper substructure into some $B\in \cC$.
\end{definition}

For a structure $M$, the \emph{age} of $M$, $age(M)$, is the class of isomorphism types of finitely generated substructures of $M$.

\begin{definition}\label{def:weakFraisse}
Let $\cC$ be a countable hereditary class of finitely generated structures
and let $M$ be a countable structure.

\begin{enumerate}

\item ($\cC$-Universality) We say that $M$ is $\cC$-universal if $age(M)=\cC$.

\item (Weak $\cC$-Homogeneity) We say that $M$ is weakly $\cC$-homogeneous if for any structure $A\in\cC$ there is some $B\in\cC$ and embedding $e: A\longrightarrow B$ such that for any
substructures $B_1, B_2\subset M$ with $(A_1, B_1), (A_2,B_2)$ isomorphic to $(e(A),B)$ any isomorphism $\phi: (A_1,B_1)\to (A_1,B_2)$ extends to an automorphism $f$ of $M$ with $\phi|_{A_1}=f|_{A_1}$.


\item (Weak $\cC$-Saturation) For any structure $A\in\cC$ there is some $B\in\cC$ and embedding $e: A\longrightarrow B$ such that for any $D\in\cC$ containing $B$  and any embedding $h: B\longrightarrow M$ there is an embedding of $D$ into $M$ extending $h\circ e$ on $A$.


\end{enumerate}

The structure $M$ is called $\cC$-homogeneous  ($\cC$-saturated, respectively) if  we can choose $A=B$ and $e=id$.

A countable structure $M$ is called a (weak) Fra\"iss\'e limit for $\cC$ if it is $\cC$-universal and (weakly) $\cC$-homogeneous.

\end{definition}

We will show that a countable hereditary unbounded class $\cC$ of finitely generated structures has a weak Fra\"iss\'e limit if and only if $\cC$  satisfies (JEP) and (WAP).

\subsection{The space of structures and the action of \texorpdfstring{$S_\infty$}{S infty}}
Let $\cC$ be a countable hereditary class of finitely generated structures
and let $\bS$ be the set of all structures $M$ defined on the universe $\omega$ (or any other fixed countable universe) with the following two properties:
\begin{itemize}
\item[(i)] the age of $M$ is contained in $\cC$ and
\item[(ii)] $M$ is not finitely generated.
\end{itemize}

\vspace*{2mm}
\noindent
{\bf Metric on $\bS$:} We endow $\bS$ with the following metric:
For two different structures $M, N\in\bS$ we put
$d(M,N) = 2^{-k}$
where $k\in\{0,1,\ldots\}$ is minimal such that the substructures of $M$ and $N$
generated by the elements $0, 1,\ldots, k$ are different, i.e.\ either these
substructures have different universes as subsets of $\omega$ or their universes agree, but some constant, relation or function differs on the universe.
Note that with this metric, $\bS$ becomes a complete (ultra)metric space. Consequently, the Baire category theorem holds in $\bS$.

\vspace*{2mm}
\noindent
{\bf Topology on $\bS$:} For a finitely generated structure $B$ whose isomorphism type is contained in $\cC$ and such that $\dom B\subset \omega$,
let $\cO_B$ be the set of all structures $A\in \bS$ whose restriction to $\dom B$ coincides with $B$. It is easy to see that these sets (after discarding the empty ones) form a basis of some topology $\cT$ on $\bS$, see~\cite{kabluchko_tent_comeager1}, and that $\cT$ is the topology induced by the metric $d$ on $\bS$.

\begin{proposition}
The space $(\bS,\cT)$ is separable if and only if every structure from $\cC$ that can be embedded into a countable, infinitely generated structure,  is finite.
\end{proposition}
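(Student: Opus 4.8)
The plan is to exploit that $(\bS,d)$ is an ultrametric space. For each $k$ declare $M\sim_k N$ if $\langle 0,1,\dots,k\rangle_M=\langle 0,1,\dots,k\rangle_N$ (equality of structures with domain inside $\omega$, not merely up to isomorphism). Each $\sim_k$ is an equivalence relation whose classes are exactly the balls of radius $2^{-k}$ (which are open and closed); these partitions refine each other as $k$ grows, they separate points, and each class has diameter at most $2^{-k}$. Hence, as for any such metric space, $\bS$ is separable if and only if for every $k$ there are only countably many $\sim_k$-classes, i.e.\ only countably many structures occur as $\langle 0,\dots,k\rangle_M$ for $M\in\bS$. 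Every such structure has isomorphism type in $\cC$ (being a finitely generated substructure of $M$) and embeds into the countable, infinitely generated structure $M$, so the statement reduces to counting these finitely generated ``approximations''.

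For ``$\Leftarrow$'', suppose every member of $\cC$ that embeds into a countable, infinitely generated structure (with age in $\cC$, i.e.\ into a relabelled copy of a member of $\bS$) is finite. Then each $\langle 0,\dots,k\rangle_M$ with $M\in\bS$ has finitely many elements. For a fixed isomorphism type $A\in\cC$ with $|A|<\infty$ there are only countably many structures with domain contained in $\omega$ isomorphic to $A$, since there are only countably many injections of a finite set into $\omega$; as $\cC$ is countable, there are only countably many finite structures with domain in $\omega$ and isomorphism type in $\cC$. Choosing, for each such structure that actually occurs as some $\langle 0,\dots,k\rangle_M$, one witnessing $M\in\bS$, we obtain a countable set $D\subseteq\bS$; it is dense, because for $N\in\bS$ and any $k$ the witness attached to $\langle 0,\dots,k\rangle_N$ agrees with $N$ up to index $k$ and hence lies within $2^{-k}$ of $N$. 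Thus $\bS$ is separable.

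For ``$\Rightarrow$'' I argue contrapositively: assume some $A\in\cC$ with infinitely many elements embeds into a member of $\bS$. Fixing a finite generating set of $A$ and relabelling the universe of the ambient structure so that the images of these generators are $0,1,\dots,m-1$, we may assume there is $N_0\in\bS$ with $A_0:=\langle 0,1,\dots,m-1\rangle_{N_0}$ infinite. Since $N_0$ is not finitely generated, $\omega\setminus\dom A_0$ cannot be finite (else $N_0$ would be generated by $\{0,\dots,m-1\}$ together with that finite set), so $\dom A_0$ is an infinite, co-infinite subset of $\omega$ containing $\{0,\dots,m-1\}$. For every infinite, co-infinite $T\subseteq\omega$ with $\{0,\dots,m-1\}\subseteq T$ there is a permutation $\sigma_T$ of $\omega$ fixing $0,\dots,m-1$ pointwise with $\sigma_T(\dom A_0)=T$; the push-forward $(\sigma_T)_*N_0$ again lies in $\bS$ (it is isomorphic to $N_0$, has universe $\omega$, and is not finitely generated), and its substructure generated by $\{0,\dots,m-1\}$ has domain $T$. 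For $T\neq T'$ these members of $\bS$ already disagree on $\{0,\dots,m-1\}$, so they are at distance at least $2^{-(m-1)}$. There are continuum many admissible $T$, so $\bS$ contains an uncountable $2^{-(m-1)}$-separated set and is not separable.

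I expect the only delicate points to be, first, the (necessary) reading of ``embeds into a countable, infinitely generated structure'' as ``embeds into a relabelled member of $\bS$'' — this is what makes the two sides match, since $\bS$ by definition only contains structures whose age lies in $\cC$ — and second, in the contrapositive, the observation that non-finite-generation of $N_0$ forces $\dom A_0$ to be co-infinite, which is exactly what leaves room to slide $\dom A_0$ around $\omega$ in continuum-many ways. The remainder is routine: the ultrametric reformulation of separability and the count of injections of a finite set into $\omega$.
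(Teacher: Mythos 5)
Your proof is correct, and its backward direction is essentially the paper's: there are only countably many finite structures with domain in $\omega$ and isomorphism type in $\cC$ (equivalently, countably many non-empty basic sets $\cO_B$), and picking one witness for each yields a countable dense set. The forward direction, however, takes a genuinely different route. The paper argues by pigeonhole: given an infinite $A_0\in\cC$ embeddable into a member of $\bS$, it considers the uncountably many non-empty open sets $\cO_{A(U)}$ for infinite $U\subset\omega$, notes each must contain a point of the countable dense set, and gets a contradiction from the fact that a single structure on $\omega$ has only countably many finitely generated substructures. You instead build an explicit uncountable $2^{-(m-1)}$-separated family by pushing one witness $N_0$ forward under permutations fixing the generators $0,\dots,m-1$ and sliding the domain of the infinite generated substructure (which you correctly observe must be co-infinite, since $N_0$ is not finitely generated) onto arbitrary infinite co-infinite sets $T$. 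This buys you two things: you bypass the paper's slightly glossed claim that $\cO_{A(U)}$ is non-empty for \emph{every} infinite $U$ (which needs the cardinality of $\omega\setminus U$ to match the complement of the copy of $A_0$ in the ambient structure, or an extension argument), and you obtain the stronger conclusion of a uniformly separated uncountable set rather than mere non-separability. Your reading of ``embeddable into a countable, infinitely generated structure'' as ``embeddable into a relabelled member of $\bS$'' is the intended one and is exactly what the paper's proof uses implicitly for $\cO_{A(U)}\neq\emptyset$. Two harmless loose ends to smooth: take the generating set of $A$ non-empty, so $m\geq 1$ and the distance bound $2^{-(m-1)}$ makes sense; and in the density argument observe that if $B=\langle 0,\dots,k\rangle_N$ then $B$ is generated by $\{0,\dots,k\}$ intrinsically, so the chosen witness $M$ containing $B$ as such a generated substructure satisfies $\langle 0,\dots,k\rangle_{M}=B$ for this $k$ as well, regardless of the index at which the witness was originally recorded.
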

\begin{proof}
Let  $(\bS,\cT)$ be separable with a countable dense set $\{X_1,X_2,\ldots\}$. Assume, by contraposition, that there is an infinite structure $A_0\in \cC$ that can be embedded into some countable infinitely generated structure.   Taking some infinite set $U\subset \omega$ as a universe, we can construct a structure $A(U)$ isomorphic to $A_0$. The open sets of the form $\cO_{A(U)}$ are non-empty, hence any such set contains some point of the form $X_i$. Since there are uncountably many possible $U$'s, there is at least one $X_i$ contained in uncountably many sets of the form $\cO_{A(U)}$. Hence, $X_i$ has uncountably many finitely generated substructures with pairwise different universes. But this is a contradiction. Conversely, if all finitely generated substructures of any $M\in\bS$ are finite, then there are only countably many non-empty basic open sets of the form $\cO_B$. By choosing a point in any such set, we obtain a countable dense subset of $\bS$, thus proving separability.
\end{proof}

\vspace*{2mm}
\noindent
{\bf Action of $S_\infty$ on  $\bS$:}  Consider $S_\infty$, the permutation group on $\omega$, as a Polish group with  a neighbourhood basis of the identity consisting of pointwise stabilizers of finite sets.
There is a natural continuous action of   $G=S_\infty$ on  $\bS$ just given by the action $g\in G$ on the domain $\omega$ of a structure and transferring the structure to the image. Under this action, the orbit of a structure $M\in\bS$ is the class of structures isomorphic to $M$ and the stabilizer $G_M$ consists of all automorphisms of $M$.

\begin{proposition}\label{prop:turbulent}(\cite{KechrisRosendal}, Prop. 3.2)
Let $G$ be a closed subgroup of $S_\infty$ acting continuously on a topological space $X$ and let $x\in X$. Then the following are equivalent:
\begin{enumerate}
\item The orbit of $x$ under $G$ is non-meager.
\item For any open subgroup $V$ of $G$ the orbit of $x$ under $V$ is
somewhere dense.
\end{enumerate}
\end{proposition}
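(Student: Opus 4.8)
The plan is to prove the two implications separately. The only facts I will need about $G$ are consequences of its being a closed subgroup of $S_\infty$: it is a Polish group, hence a Baire space, and it is second countable; the left translates $gV$ of open subgroups $V\le G$ form a basis for its topology; and every open subgroup $V\le G$ has countable index, because its cosets are pairwise disjoint nonempty open sets in a separable space, so that $G=\bigcup_{n}g_nV$ for suitable $g_n\in G$. I will also use repeatedly that for each $g\in G$ the map $y\mapsto g\cdot y$ is a homeomorphism of $X$ (the action is continuous and $g$ is invertible); in particular it preserves the classes of nowhere dense sets and of somewhere dense sets.

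For $(1)\Rightarrow(2)$ I would argue by contraposition. If some open subgroup $V\le G$ has $V\cdot x$ nowhere dense, decompose $G=\bigcup_n g_nV$ as above; then $G\cdot x=\bigcup_n g_n\cdot(V\cdot x)$ is a countable union of nowhere dense sets (each $g_n\cdot(V\cdot x)$ being a homeomorphic image of the nowhere dense set $V\cdot x$), hence meager, contradicting~$(1)$.

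The substance is in $(2)\Rightarrow(1)$, and the key idea is to run the Baire category argument inside the group $G$ rather than inside $X$, so that no Baire hypothesis on $X$ is needed. Assume for contradiction that $G\cdot x$ is meager in $X$; then it is contained in $\bigcup_k C_k$ with each $C_k$ closed and nowhere dense (take closures of a family of nowhere dense sets witnessing meagerness). Pulling back along the continuous orbit map $g\mapsto g\cdot x$ gives $G=\bigcup_k A_k$, where $A_k=\{g\in G:g\cdot x\in C_k\}$ is closed in $G$. Since $G$ is Baire, some $A_{k_0}$ is non-meager, and being closed it then has nonempty interior; using that left translates of open subgroups form a basis, we get $g_0\in G$ and an open subgroup $V\le G$ with $g_0V\subseteq A_{k_0}$, i.e.\ $V\cdot x\subseteq g_0^{-1}\cdot C_{k_0}$. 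The right-hand side is a homeomorphic image of a closed nowhere dense set, hence closed and nowhere dense, so $\overline{V\cdot x}$ has empty interior; that is, $V\cdot x$ is nowhere dense, contradicting~$(2)$.

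I do not expect a genuine obstacle; the main conceptual move is the reduction in $(2)\Rightarrow(1)$ that transports meagerness from $X$ back to $G$ via the orbit map and then extracts an honest open translate $g_0V$ from a non-meager \emph{closed} subset of $G$. The only things to watch are that a closed set with empty interior is nowhere dense (this is what turns ``non-meager'' into ``nonempty interior''), and keeping the left/right conventions straight, so that $g_0V\subseteq A_{k_0}$ unravels to $V\cdot x\subseteq g_0^{-1}\cdot C_{k_0}$ rather than to $g_0\cdot C_{k_0}$.
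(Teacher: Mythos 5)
Your proposal is correct and follows essentially the same route as the paper: the easy direction uses the countable-index coset decomposition $G=\bigcup_n g_nV$ to write the $G$-orbit as a countable union of translates of a nowhere dense set, and the hard direction pulls the meager cover of $G\cdot x$ back through the orbit map to closed sets $A_k$ covering $G$, applies Baire category in $G$ to find a translate $g_0V$ of an open subgroup inside some $A_{k_0}$, and concludes $V\cdot x\subseteq g_0^{-1}\cdot C_{k_0}$ is nowhere dense. Your write-up just makes explicit some steps the paper leaves terse (closed non-meager sets have nonempty interior, translates preserve nowhere-denseness), so there is nothing to correct.
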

\begin{proof}
$\neg(2)\Rightarrow \neg(1)$:
Suppose that the orbit  of $x$ under some open subgroup $V\leq G$ is nowhere dense. Since $V$ has countable index in $G$ and the orbit of $x$ under each coset $gV$ is also nowhere dense, the orbit of $x$ under $G$ is meager.

\vspace*{2mm}
\noindent
$\neg (1)\Rightarrow \neg(2)$:
Conversely, assume that the orbit of $x$ under $G$ is meager which means that it is contained in the countable union of closed nowhere dense sets $F_n, n\in\omega$. Then the sets $K_n=\{g\in G: g(x)\in F_n\}$ are closed in $G$ and their union is all of $G$. At least one of the $K_n$ has non-empty interior, so contains a translate of some open subgroup $V$. Thus the orbit of $x$ under (all translates of) $V$ is nowhere dense.
\end{proof}

\subsection{Main result}
Recall that a subset of a topological space is called comeager if its complement is meager or, equivalently, if the set can be represented as a countable intersection of subsets with dense interior.
\begin{theorem}\label{thm:main}
Let $\cC$ be a countable hereditary unbounded\footnote{An example due to M\'ark Po\'or shows
that the assumption that $\cC$ be unbounded is necessary.
} class of finitely generated structures and let  $(\bS, \cT)$ be the topological space of all countable structures on $\omega$ with age contained in $\cC$  and which are not themselves finitely generated. The topology $\cT$ is given by the metric $d$ as above. Then the following
are equivalent:
\begin{enumerate}
\item The class $\cC$ has a weak Fra\"iss\'e limit $M\in\bS$.
\item There is a $\cC$-universal and weakly $\cC$-saturated structure $M\in\bS$.
\item The $S_\infty$-orbit of some structure $M\in\bS$ is comeager in $\bS$.
\item For some structure $M\in\bS$  the set of structures isomorphic to $M$ is comeager in $\bS$.
\item The class $\cC$ satisfies (JEP) and (WAP).
\item There is a $\cC$-universal weakly $\cC$-saturated structure $M\in\bS$ such that (*) holds:
\begin{itemize}

 \item[(*)]  any finitely generated substructure $A$ of $M$ is contained in a finitely generated substructure $B\subset M$ (not necessarily the one given by  weak $\cC$-saturation) such that any $D\in\cC$ containing a copy of $B$ embeds over $A$ into $M$.

\end{itemize}
\end{enumerate}
It follows in particular, that a weak Fra\"iss\'e limit, if it exists, is unique up to isomorphism.

\end{theorem}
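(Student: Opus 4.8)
The plan is to close the cycle of implications $(1)\Rightarrow(2)\Rightarrow(5)\Rightarrow(3)\Rightarrow(4)\Rightarrow(6)\Rightarrow(1)$; the displayed uniqueness assertion then falls out of $(1)\Leftrightarrow(4)$, since two structures whose isomorphism classes are both comeager in $\bS$ have intersecting, hence equal, isomorphism classes. (I will tacitly assume $\bS\neq\emptyset$; otherwise the statement is degenerate.) Of the links, $(3)\Leftrightarrow(4)$ is a tautology, because the $S_\infty$-orbit of $M\in\bS$ is by construction exactly the set of members of $\bS$ isomorphic to $M$. The two model-theoretic upgrades $(1)\Rightarrow(2)$ and $(6)\Rightarrow(1)$ I would handle by back-and-forth. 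For $(1)\Rightarrow(2)$: given $A\in\cC$, take the pair $(B,e)$ witnessing weak $\cC$-homogeneity; to verify weak $\cC$-saturation for this $(B,e)$, let $D\in\cC$ contain $B$ and let $h\colon B\to M$ be an embedding, use $\cC$-universality to get an embedding $j\colon D\to M$, observe that $j|_B\circ h^{-1}$ is an isomorphism between the copies $(h(e(A)),h(B))$ and $(j(e(A)),j(B))$ of $(e(A),B)$ in $M$, extend it by weak homogeneity to an automorphism $f$ of $M$ agreeing with it on $h(e(A))$, and check that $f^{-1}\circ j\colon D\to M$ then extends $h\circ e$ on $A$.

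For $(6)\Rightarrow(1)$ I must produce, for each $A\in\cC$, a witness of weak $\cC$-homogeneity: using universality fix a copy $A_1\subseteq M$ of $A$ and, using~(*), a finitely generated $B_1$ with $A_1\subseteq B_1\subseteq M$ such that every $D\in\cC$ containing a copy of $B_1$ embeds over $A_1$ into $M$, and set $(B,e):=(B_1,A_1\hookrightarrow B_1)$. Given copies $(A_1',B_1'),(A_2',B_2')$ of $(A_1,B_1)$ in $M$ and an isomorphism $\phi$ between them, I would extend $\phi|_{A_1'}$ to an automorphism of $M$ by the usual alternating construction: the base case is $\phi$ itself, each step enlarges the current finitely generated partial isomorphism across a weak-amalgamation witness of $\cC$ (these play the role of the admissible bases), and property~(*) — which, transported by an isomorphism, persists for any isomorphic copy of the base with the embedding correspondingly conjugated — guarantees the needed extension lies inside $M$. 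For the short step $(2)\Rightarrow(5)$: (JEP) holds because any $A,B\in\cC$ embed into $M$ and the substructure generated by the union of the images is finitely generated, hence in $\cC$ and contains copies of both; (WAP) holds with $(B,e)$ the weak-saturation witness for $A$, since given $h_i\colon B\to B_i$ in $\cC$ and any embedding $\eta\colon B\to M$, weak saturation yields $f_i\colon B_i\to M$ with $f_i\circ h_i\circ e=\eta\circ e$, so the substructure $C\subseteq M$ generated by $f_1(B_1)\cup f_2(B_2)$ lies in $\cC$ and the corestrictions $g_i\colon B_i\to C$ satisfy $g_1\circ h_1\circ e=g_2\circ h_2\circ e$.

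The substantial step, and the one I expect to be the main obstacle, is $(5)\Rightarrow(3)$. Assuming $\cC$ has (JEP) and (WAP), I would fix for each isomorphism type in the countable class $\cC$ a weak-amalgamation witness, and call $M\in\bS$ \emph{rich} if for every finitely generated $A\subseteq M$ there is a finitely generated $B$ with $A\subseteq B\subseteq M$ such that every $D\in\cC$ containing a copy of $B$ embeds over $A$ into $M$. Two things must be shown: (i) the set $\fX$ of rich structures is comeager in the complete ultrametric space $(\bS,d)$; (ii) any two rich structures are isomorphic, so that $\fX$ is, up to a meager set, a single $S_\infty$-orbit (here a direct back-and-forth between two rich structures, extending the partial isomorphism over the current base by richness and using the fixed weak-amalgamation witnesses as admissible bases, does the job; Proposition~\ref{prop:turbulent} can alternatively be used to see that a comeager set lies in one orbit). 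For~(i) I would decompose richness into countably many conditions indexed by finite tuples from $\omega$ (generating the base $A$) together with the finite part of $M$ already committed to, show each such condition holds on a dense open set, and apply Baire. Density is where (JEP) enters: a copy of the relevant extension $D$ can always be fitted into a prescribed basic open set $\cO_{B_0}$ without disturbing $B_0$, by amalgamating $B_0$ with a suitable member of $\cC$. The ingredient specific to the weak setting, and the crux of the matter, is the choice of witness $B$ over a given $A$: one must arrange that \emph{all} extensions $D\in\cC$ with $B\subseteq D$ are realized over $A$ inside one and the same $M\in\bS$, which I would do by building an increasing chain of structures over $A$ successively absorbing every such $D$ — each amalgamation step in that chain being precisely an instance of (WAP) for the pair $(A,B)$.

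Finally, for $(4)\Rightarrow(6)$, suppose the isomorphism class of $M$ is comeager in $\bS$. A comeager set meets every nonempty basic open set, and every $A\in\cC$ embeddable into some member of $\bS$ gives rise to such a set; combined with $age(M)\subseteq\cC$ (built into $\bS$) this forces $age(M)=\cC$, so $M$ is $\cC$-universal. Likewise, failure of weak $\cC$-saturation would confine $M$ to a meager set, so $M$ is weakly $\cC$-saturated; by the already-established implication $(2)\Rightarrow(5)$ the class $\cC$ then has (JEP) and (WAP), whence by (the proof of) $(5)\Rightarrow(3)$ the isomorphism-invariant set $\fX$ of rich structures is comeager. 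Two comeager sets meet, so $M$ is rich, and a rich $\cC$-universal structure visibly satisfies weak $\cC$-saturation and~(*), which is exactly~(6). This closes the cycle, and the uniqueness statement follows from $(1)\Leftrightarrow(4)$.
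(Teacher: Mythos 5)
Your overall architecture is sound, and several links are correct: $(1)\Rightarrow(2)$, $(3)\Leftrightarrow(4)$, $(6)\Rightarrow(1)$ and the uniqueness argument match the paper in substance, while your direct model-theoretic $(2)\Rightarrow(5)$ (reading off (JEP) from universality and (WAP) from the weak-saturation witness inside $M$) is a legitimate shortcut compared with the paper, which instead derives (5) from (3) via the $S_\infty$-action. The genuine gap is in the step you yourself single out as the crux, $(5)\Rightarrow(3)$, part (i): the claim that richness can be decomposed into countably many conditions each holding on a \emph{dense open} set. Richness at a tuple $\bar a$ is the condition ``there exists a finitely generated $B$ with $\langle\bar a\rangle\subseteq B\subseteq M$ such that every $D\in\cC$ containing a copy of $B$ embeds over $\langle\bar a\rangle$ into $M$''. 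Because the existential quantifier over the witness $B$ precedes the universal quantifier over $D$, this set is only a countable union of $G_\delta$ sets; it is not open, and typically it contains no nonempty open set at all, since its complement is dense: for instance, for $\cC$ the class of finite graphs, every basic open set $\cO_Q$ contains the extension of $Q$ by infinitely many isolated vertices and no further edges, which fails richness at every vertex. So ``each condition holds on a dense open set'' is false, and the plain dense-open-plus-Baire argument you propose cannot establish comeagerness. This $\exists B\,\forall D$ pattern is exactly what separates the weak from the classical Fra\"iss\'e setting, where saturation really is a countable intersection of open sets.

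The comeagerness of the rich structures is true, but every known route supplies an extra ingredient that your proposal omits: either a Banach--Mazur game argument (as in \cite{kruckman} and in \cite{KechrisRosendal}), or, as in the paper, the orbit-theoretic detour --- construct one rich structure by the chain construction (this is the paper's $(5)\Rightarrow(6)$, and it is essentially what your closing remark about ``an increasing chain over $A$ absorbing every such $D$'' produces, namely a \emph{single} structure, not a comeager set), then show its $S_\infty$-orbit is dense and, via Proposition~\ref{prop:turbulent}, non-meager (weak saturation makes the $V$-orbit somewhere dense for every open subgroup $V$), and finally invoke the theorem cited from \cite{Sami} that a non-meager orbit of a continuous action of a closed subgroup of $S_\infty$ is $G_\delta$, hence comeager once dense. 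Without one of these mechanisms your step (i) is unproved, and your $(4)\Rightarrow(6)$ collapses as well, since it leans on the same unproved comeagerness of richness. Two smaller points: your step (ii) (any two rich structures are isomorphic, granted (JEP)) is fine and mirrors the paper's $(6)\Rightarrow(1)$ back-and-forth; but the parenthetical that Proposition~\ref{prop:turbulent} ``can alternatively be used to see that a comeager set lies in one orbit'' misreads that proposition, which characterizes non-meagerness of a single orbit and says nothing of the sort.
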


\begin{proof}
(1) $\Rightarrow$ (2):
 Let $M\in\bS$ be a weak Fra\"iss\'e limit of $\cC$. We have to show that $M$ is weakly $\cC$-saturated, Let $A\subseteq B\in\cC$  be as given by weak $\cC$-homogeneity. Assume that $(A_0,B_0)$ is an isomorphic copy of $(A,B)$ in $M$.
Let $D\in\cC$ contain an isomorphic copy of $B$. By $\cC$-universality,  $M$ contains a substructure $D_0$ isomorphic to $D$. By $\cC$-homogeneity, any isomorphism between the copy of $A$ inside $D_0$ and $A_0$ can be extended to an automorphism of  $M$, yielding an embedding of $D$ over $A_0$ into $M$ as required.

(2) $\Rightarrow$ (3):
Let $M\in\bS$ be $\cC$-universal and weakly $\cC$-saturated. Then $age(M)=\cC$ and thus the $S_\infty$-orbit of $M$ is dense. It now suffices to verify that the $S_\infty$-orbit of $M$ is non-meager: since the action of $S_\infty$ on the metric space $\bS$ is continuous, it then follows from \cite{Sami}, Thm. 4.4 that the orbit is $G_\delta$ and hence comeager (given that it is dense). To see that the $S_\infty$-orbit of $M$ is non-meager, it suffices by Proposition~\ref{prop:turbulent} to see that for any open subgroup $V\leq S_\infty$ the $V$-orbit of $M$ is somewhere dense. We may assume that there is a finite set $A_0$ such that $V=G_{A_0}$ is the pointwise stabilizer of $A_0$. Let $A\subset M$ be the substructure generated by $A_0$ and let $B\in\cC$ be such that $B$ contains an isomorphic copy of $A$ and such that any extension of $B$ can be embedded into $M$ by
weak $\cC$-saturation. Now since $B\in age(M)$ we may assume that $B\subset M$. Let $A'$ denote the isomorphic copy of $A$ inside $B$ with $A_0'$ denoting the finite set corresponding to $A_0$. Then $V'=G_{A_0'}$ is conjugate to $G_{A_0}$ in $S_\infty$ and it clearly suffices to prove the claim for $V'$.
We claim that the $V'$-orbit of $M$ is dense in $O_B$:
By $\cC$-saturation of $M$, any extension $D$ of $B$ can be embedded over $A'$ into $M$. This says that for some $g\in V'$ we have $g(M)\in O_D$, which is enough.

(3) $\Leftrightarrow$ (4): clear.

(3) $\Rightarrow$ (5):
Let $M\in\bS$ be an element whose orbit under $G=S_\infty$ is comeager.  Since
$\bS$ is a Baire space, a comeager set is dense and thus we see that $age(M)=\cC$ and hence (JEP) holds. It remains to prove that $\cC$ satisfies (WAP). Let $A\in\cC$. Then $M$ contains a substructure isomorphic to $A$ and we may  assume that $A\subset M$. Let $A_0\subset A$ be a finite set generating $A$. Then the pointwise stabilizer of $A_0$ in $G$ is an open subgroup $G_{A_0}$ and by Proposition~\ref{prop:turbulent} the orbit of $M$ under $G_{A_0}$ is dense in some basic open set $O_B$ for some finitely generated structure $B$. Thus, $M':= f(M)\in \cO_B$ for some $f\in G_{A_0}$.  Let $A'=f(A)$ be the structure (with domain contained in $\omega$) obtained by transferring $A$ by $f$. Then, $M'\in \cO_B \cap \cO_{A'}$ meaning that $B$ and $A'$ are substructures of $M'$.
Let $D$ be the substructure of $M'$ generated by $A'$ and $B$.  Then, $O_D\subset O_{A'}\cap O_B$.
To prove (WAP), let $E, F\in\cC$ be structures containing $D$ (and assume that the domains of $E$ and $F$ are embedded into $\omega$). Since the orbit of $M$ (and hence $M'$) under $G_{A_0}$ is dense in $O_D$, there exist $g, h\in G_{A_0}$ such that $g(M')\in O_E$
and  $h(M')\in O_F$. Observe that both $g$ and $h$ equal to the identity map on $A_0$, hence $g|_{A'}=h|_{A'} = id$.  Hence the substructure of $M'$ generated by $g^{-1}(E)\cup h^{-1}(F)\subset M'$ is the required weak amalgam over $A'$.

(5) $\Rightarrow$ (6):
Assume that  $\cC$ has (JEP) and (WAP). We will construct  a
$\cC$-universal and weakly $\cC$-saturated structure satisfying (*). 
  Choose an enumeration $(C_i)_{i\in\omega}$ of all isomorphism
  types in $\cC$. We construct $M$ as the union of an ascending
  chain
  \[M_0\subset M_1\subset\cdots\subset M\]
  of elements of $\cC$. Suppose that $M_{i}$ is already constructed.
  If $i=2n$ is even, let $A_{i+1}$ be the structure  embedding both $M_i$ and $C_n$
  given by (JEP) and let $M_{i+1}\in\cC$ be the structure given by (WAP) for $A_{i+1}$.

  We use the odd steps to ensure that the limit $M$ is weakly $\cC$-saturated.
  For $i=2n+1$, let $A\in\cC$ and let $B\in\cC$  be
  as guaranteed by (WAP) where we assume that $A\subset B$.
  Let $D\in\cC$ and assume we are
  given two embeddings
   $f_0\colon B\to M_i$ and $f_1\colon B\to D$. Then we let $M_{i+1}\in\cC$ be
   the structure guaranteed to exist by (WAP).

  Assume now
  that we have $A\subset B,D \in\cC$ and embeddings $f_0\colon B\to D$
  and $f_1\colon B\to M$.  Since $B$ is finitely generated, the image
  of $f_0$ will be
  contained in some $M_j$. Thus, in order to guarantee the weak $\cC$-saturation
  of $M$, we have to
  ensure during the construction of the the $M_i$,
  that eventually, for some odd $i\geq j$, the embeddings
  $f_0\colon B\to M_i$ and $f_1\colon B\to D$ were used in the
  construction of $M_{i+1}$. This can be done since for each $j$
  there are -- up to isomorphism -- at most countably many
  possibilities.  Thus there exists an embedding $g_1\colon
  D\to M_{i+1}$ with $f_0|_A=g_1\circ f_1|_A$.

By the even stages, we have $\cC\subset  age(M)$. Conversely, the finitely generated
substructures of $M$ are the substructures of the
  $M_i$. Since the $M_i$ belong to $\cC$, their finitely-generated
  substructures also belong to $\cC$. Hence $age(M)=\cC$ as required.
  Furthermore, since any finitely generated substructure $A$ is contained in some $M_i$,
  the structure $M_{2i}$ satisfies (*) for~$A$. Note that $M$ is not finitely generated as otherwise we would have $M\in\cC$ and hence $\cC$ would be bounded.

(6) $\Rightarrow$ (1):
Let $M\in\bS$ be $\cC$-universal and
 weakly $\cC$-saturated  such that (*) holds.
 We have to show that $M$ is weakly $\cC$-homogeneous. Let $A_0\subseteq A_1\in\cC$  be as given by weak $\cC$-saturation. Assume that $(B_0,B_1)$ and $(C_0,C_1)$ are isomorphic copies of $(A_0,A_1)$ in $M$ and that $\phi_0: (B_0,B_1)\to (C_0,C_1)$ is an isomorphism. We want to extend $\phi_0|_{B_0}$ to an automorphism of $M$ by a  back-and-forth construction.
Suppose we have already constructed $\phi_{2i-1}: B_{2i-1}\to C_{2i-1}$ with $\phi_{2i-1}|_{B_0}=\phi_0$
such that  $C_{2i-1}$ is an extension of $C_{2i-2}$ satisfying (*).
 Let $B_{2i}$ be the substructure given by (*) containing the substructure of $M$ generated by $B_{2i-1}\cup\{i\}$. Then
 there is a copy $C_{2i}$ such that there is an isomorphism $\phi_{2i}: (B_{2i-1}, B_{2i})\to (C_{2i-1},C_{2i})$ with $\phi_{2i}|_{B_0}=\phi_0$. Now let  $C_{2i+1}$ be the structure given by (*) containing  the substructure of $M$ generated by $C_{2i}\cup\{i\}$ and choose an isomorphic copy  $B_{2i+1}$  of $C_{2_i+1}$ and an
 isomorphism $\phi_{2i+1}: (B_{2i}, B_{2i+1}) \to (C_{2i},C_{2_i+1})$ extending $\phi_{2i}|_{B_{2i-1}}$.
 Thus again $\phi_{2i+1}|_{B_0}=\phi_0$ and we continue.
\end{proof}

\subsection{Finding automorphisms with comeager conjugacy class}

The proof of Theorem~\ref{thm:main} was motivated by the methods from \cite{KechrisRosendal}. We now show how this in turn implies their result on comeager conjugacy classes.

For a Fra\"iss\'e class $\cC$ of finitely generated $L$-structures, let $\cC_p$ denote the class of all  systems of the form $S=\langle A, \psi:B\to C\rangle$ where $A, B, C\in\cC, B, C\subset A$ and $\psi$ is an isomorphism of $B$ and $C$. An embedding of one system  $S=\langle A, \psi:B\to C\rangle$  into
 another system  $T=\langle D, \phi:E\to F\rangle$
is an embedding $f \colon A\to D$ such that $f$ embeds $B$ into $E$, $C$ into $F$  and such that  $f\circ \psi\subset  \phi\circ f$.

We now consider comeagerness in the space $\bS_p$ of structures $(M,h)$ where $M\in \bS$ and
$h\in Aut(M)$. Then,  for any finitely generated substructure $A$ of $M$
and isomorphic finitely generated substructures $B\subset A$, $C=h(B)$, the system  $S=\langle A, h|_B : B\to C\rangle$ is in $\cC_p$. As before, the space $\bS_p$ is a complete metric space. The group $Aut(M)$ is endowed with the usual topology of pointwise convergence.

\begin{remark}
We could instead consider an expanded language $L_p=L\cup\{f\}$ containing a new symbol $f$ for a partial function. Then $\cC_p$ denotes the class of all structures $A\in\cC$ expanded by an interpretation of the symbol $f$
as an isomorphism $f: B\to C$  where $A, B, C\in\cC $ and $B, C\subset A$.  By embedding an $L_p$-structure $A$ into an $L_p$-structure $D$, we may enlarge the domain of the function $f$ inside $A$ (contrary to the usual conventions).

\end{remark}

Note that we do not assume the Fra\"iss\'e class $\cC$ to consist of finite structures.

\begin{theorem}\label{thm:conjugacyclass} Let $\cC$ be a Fra\"iss\'e class of finitely generated structures with Fra\"iss\'e limit $M$. Then the following are equivalent:
\begin{enumerate}
\item $\cC_p$ has (JEP) and (WAP).
\item The weak Fra\"iss\'e limit of $\cC_p$  exists in $\bS_p$ and has comeager isomorphism type in $\bS_p$.
\item There is $f\in Aut(M)$ with comeager conjugacy class in $Aut(M)$.
\end{enumerate}
If any of the above conditions holds, the weak Fra\"iss\'e limit of $\cC_p$ has the form $(M, f)$, where $f\in Aut(M)$ has a comeager conjugacy class.
\end{theorem}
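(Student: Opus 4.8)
The plan is to deduce Theorem~\ref{thm:conjugacyclass} from Theorem~\ref{thm:main} applied to the class $\cC_p$, after reconciling two superficial differences: first, the ambient space in Theorem~\ref{thm:main} is the space of structures on the fixed universe $\omega$ whose age lies in $\cC_p$, whereas in Theorem~\ref{thm:conjugacyclass} we work in $\bS_p = \{(M,h) : h\in\Aut(M)\}$ with $M$ the fixed Fra\"iss\'e limit of $\cC$; and second, the ``not finitely generated'' requirement of $\bS$ must be handled on the $\cC_p$ side. For the first point, the key observation is that since $\cC$ is a genuine Fra\"iss\'e class with Fra\"iss\'e limit $M$, any countable structure whose age is contained in $\cC$ and which is $\cC$-universal and $\cC$-homogeneous is isomorphic to $M$; and the generic element of the space of $\cC_p$-structures on $\omega$ is automatically $\cC$-universal and $\cC$-homogeneous as an $L$-structure (the even steps of the back-and-forth argument underlying Theorem~\ref{thm:main}, applied to the $\cC$-part of $\cC_p$, give this). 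Hence the space of $\cC_p$-structures on $\omega$ with non-finitely-generated $L$-reduct contains a comeager copy of structures whose $L$-reduct is $M$, i.e.\ comeagerly the space looks like $\bS_p$. One must check that the forgetful map $(M',h')\mapsto$ (iso type of $(M',h')$) matches conjugacy in $\Aut(M)$: two pairs $(M,f_1)$ and $(M,f_2)$ are isomorphic as $\cC_p$-structures iff there is an automorphism $\sigma\in\Aut(M)$ with $\sigma f_1\sigma^{-1}=f_2$, which is exactly conjugacy. This identifies the $S_\infty$-orbit of $(M,f)$ in the space of $\cC_p$-structures with the conjugacy class of $f$ in $\Aut(M)$, up to the comeager identification above.

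With these identifications in place, the three conditions translate directly: (1) $\cC_p$ has (JEP) and (WAP) is condition (5) of Theorem~\ref{thm:main} for the class $\cC_p$; condition (2) of the present theorem is conditions (1) and (4) of Theorem~\ref{thm:main} for $\cC_p$ (the weak Fra\"iss\'e limit exists and has comeager isomorphism type); and condition (3), the existence of $f\in\Aut(M)$ with comeager conjugacy class, is the translation of ``the $S_\infty$-orbit of some $(M',h')$ is comeager'' via the orbit--conjugacy-class dictionary, using that comeagerly $M'\cong M$. So the equivalences (1) $\Leftrightarrow$ (2) $\Leftrightarrow$ (3) follow from the corresponding equivalences (5) $\Leftrightarrow$ (4) $\Leftrightarrow$ (3) in Theorem~\ref{thm:main}. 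The final assertion — that the weak Fra\"iss\'e limit of $\cC_p$ has the form $(M,f)$ with $f$ of comeager conjugacy class — is then immediate: its $L$-reduct is $\cC$-universal and $\cC$-homogeneous, hence isomorphic to $M$, and its orbit, being comeager, corresponds to a comeager conjugacy class.

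The step I expect to require the most care is the comeager identification of the ``abstract'' space of $\cC_p$-structures on $\omega$ (the one to which Theorem~\ref{thm:main} literally applies) with $\bS_p$ (where $M$ is pinned down). One must argue that the set of $\cC_p$-structures on $\omega$ whose $L$-reduct is isomorphic to $M$ is comeager, which amounts to showing the generic $L$-reduct is $\cC$-universal and $\cC$-homogeneous; this is where the hypothesis that $\cC$ (as opposed to merely $\cC_p$) is an honest Fra\"iss\'e class is used, and it lets us replace ``a weak Fra\"iss\'e limit of the $L$-reduct'' by ``$M$ itself''. A secondary technical point is bookkeeping the ``not finitely generated'' clause: one notes that if $M$ is infinitely generated then so is $(M,h)$ for any $h$, and conversely any $\cC_p$-structure occurring in $\bS_p$ has infinitely generated reduct since $M$ does, so this clause is harmless here. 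Once these matches are made, the proof is a translation exercise and no new combinatorial content is needed beyond Theorem~\ref{thm:main}.
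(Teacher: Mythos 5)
Your overall strategy---apply Theorem~\ref{thm:main} to the class $\cC_p$ and translate---is the same as the paper's, but the two translation steps you treat as routine are exactly where the paper has to work, and as written they are genuine gaps. The space to which Theorem~\ref{thm:main} literally applies for $\cC_p$ consists of all countable structures with age in $\cC_p$, and in such a structure the new symbol is interpreted only as a \emph{partial} map: each finitely generated substructure is a system $\langle A,\psi:B\to C\rangle$ with $\psi$ defined on a proper finitely generated part. The weak Fra\"iss\'e limit $(N,g)$ produced by the (5)~$\Rightarrow$~(6) construction therefore comes a priori with a partial, not necessarily surjective, map $g$, and the assertion that it ``exists in $\bS_p$'', i.e.\ that $g\in Aut(N)$, is precisely what must be proved. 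The paper proves this using weak $\cC_p$-saturation together with the ultrahomogeneity of the Fra\"iss\'e limit $M$ of $\cC$: if $g$ is undefined somewhere, one extends the relevant system inside $M$ by a genuine automorphism $\hat\phi$ of $M$ and embeds the extended system back into $N$; a symmetric argument gives surjectivity. Your proposal never addresses totality or surjectivity. Likewise, your claim that the generic $L$-reduct is $\cC$-universal and $\cC$-homogeneous ``by the even steps'' is not automatic: the even steps give $age(N)=\cC$, but weak $\cC$-saturation of the reduct has to be extracted from (WAP) for $\cC_p$ (the paper does this via the trivial system $\langle A, id:E\to E\rangle$), and the identification $N\cong M$ is then obtained by comparing comeager isomorphism classes in $\bS$, not by asserting homogeneity of the reduct outright.

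The second gap is the passage between ``the $S_\infty$-orbit of $(M,f)$ is comeager in $\bS_p$'' and ``the conjugacy class of $f$ is comeager in $Aut(M)$''. This is not a mere dictionary: the conjugacy class is the trace of the orbit on the fiber of structures whose reduct is the fixed copy of $M$, and comeagerness does not transfer to or from such a fiber without an argument; the paper invokes the Kuratowski--Ulam theorem, using that $Aut(M)$ is Polish and that the stabilizer of the reduct acts on the fiber by conjugation. The same issue reappears in your (3)~$\Rightarrow$~(1): from a comeager conjugacy class in $Aut(M)$ one still has to produce (JEP) and (WAP) for $\cC_p$, e.g.\ via Proposition~\ref{prop:turbulent} applied to the closed subgroup $Aut(M)\leq S_\infty$; saying that ``comeagerly $M'\cong M$'' does not by itself convert condition (3) into a comeager $S_\infty$-orbit in $\bS_p$. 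So the skeleton of your reduction is right and matches the paper, but the two substantive points of the paper's proof---that the generic partial map is a total automorphism of a copy of $M$, and the Kuratowski--Ulam transfer between orbits and conjugacy classes---are missing.
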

\begin{proof}
(1) $\Rightarrow$ (2):
The proof of  Theorem~\ref{thm:main} (5) $\Rightarrow$ (6) shows how to construct the weak Fra\"iss\'e limit $(N, g)$.
 It is left to show that $g$ is an automorphism of $N$. For this it suffices to verify that $g$ is defined on all of $N$ and is surjective. Suppose not, so there is some $a\in N$ where $g$ is not defined. Since $N$ is the weak Fra\"iss\'e limit of $\cC_p$ it is the countable union of systems $S_i=\langle N_i, \phi_i: B_i\to C_i\rangle$ for $i<\omega$. Let $i$ be minimal such that $g$ is not defined on all of $N_i$. By weak $\cC_p$-saturation, there is some system  $T=\langle D, \phi:E\to F\rangle$  extending $S_i$ such that any extension of $T$ can be embedded over $S_i$ into $N$. Since $\cC$ is a Fra\"iss\'e class with Fra\"iss\'e limit $M$, there is an embedding of $D$ into $M$ and by ultrahomogeneity of $M$, $\phi$ extends to an automorphism of $\hat{\phi}$ of $M$.
Then $\hat{\phi}$ is defined on all of $N_i\subset D$. Let $G$ be the substructure of $M$ generated by $D$ and $\hat{\phi}(D)$.  The system $\langle G, \hat{\phi}|_D: D\to \hat{\phi}(D)\rangle$ is in $\cC_p$ and extends $T$, thus embeds over $N_i$ into $N$, showing that $g$ is defined on all of $N$. A similar argument shows that $g$ is surjective.

The proof of Theorem~\ref{thm:main} (2) $\Rightarrow$ (3) shows
that the isomorphism type of $(N,g)$ in $\bS_p$ is comeager.

(2) $\Rightarrow$ (3):
Suppose the class $\cC_p$ has a weak Fra\"iss\'e limit $(N, g)$. 
 To see that $N$ is isomorphic to $M$ it suffices to check that $N$ is weakly $\cC$-saturated. Then both $M$ and $N$ have comeager isomorphism classes and thus must be isomorphic. Clearly we have $age(N)=\cC$. To see that $N$ is weakly $\cC$-saturated, let $A\subset N$ be a finitely generated substructure, so $A\in\cC$. Consider the system $\langle A, id: E\to E\rangle$ where $E$ is the substructure generated by the emptyset.  Note that any partial isomorphism extens $id: E\to E$. By (WAP) for $\cC_p$ there is some system $T=\langle B, \phi: C\to D\rangle$ such that any system containing $T$ embeds into $N$. Now let $F\in\cC$ be a finitely generated structure containing the structure $B$. Then
the system $\langle F, \phi: C\to D\rangle$ is in $\cC_p$ and hence embeds into $N$ over $A$. This shows that we can choose $B$ to witness the weak $\cC$-saturation of $N$.

It is left to prove that the conjugacy class of $g$ is comeager in $Aut(N)$. To see this note that the orbit of $(N,g)$ under $S_\infty$ is comeager in the class $\bS_p$. Since $g\in Aut(N)$, the
orbit of $S_\infty$ under the stabilizer of $N$ is exactly the orbit of $g$ under conjugation in $Aut(N)$. Since $Aut(N)$ is a Polish group, the claim follows from the Kuratowski-Ulam theorem.

(3) $\Rightarrow$ (1): The same proof as Theorem~\ref{thm:main} (5) $\Rightarrow$ (6) and (6) $\Rightarrow$ (1) shows the required.

\end{proof}

{\bf Acknowledgement:} We thank Tam\'as K\'atay, M\'ark Po\'or and Aristotelis Panagiotopoulos for pointing out the shortcomings of one of our previous definitions.





\bibliography{hall_group_bib}

\begin{thebibliography}{6}
\providecommand{\natexlab}[1]{#1}
\providecommand{\url}[1]{\texttt{#1}}
\expandafter\ifx\csname urlstyle\endcsname\relax
  \providecommand{\doi}[1]{doi: #1}\else
  \providecommand{\doi}{doi: \begingroup \urlstyle{rm}\Url}\fi

\bibitem[{Ivanov}(1999)]{ivanov}
A.A. {Ivanov}.
\newblock {Generic expansions of $\omega$-categorical structures and semantics
  of generalized quantifiers.}
\newblock \emph{{J. Symb. Log.}}, 64\penalty0 (2):\penalty0 775--789, 1999.
\newblock \doi{10.2307/2586500}.

\bibitem[Kabluchko and Tent(2017)]{kabluchko_tent_comeager1}
Z.~Kabluchko and K.~Tent.
\newblock Universal-homogeneous structures are generic.
\newblock \emph{Preprint}, 2017.
\newblock Available at \url{http://arxiv.org/abs/1710.06137}.

\bibitem[Kechris and Rosendal(2007)]{KechrisRosendal}
A.~S. Kechris and C.~Rosendal.
\newblock Turbulence, amalgamation, and generic automorphisms of homogeneous
  structures.
\newblock \emph{Proc. Lond. Math. Soc. (3)}, 94\penalty0 (2):\penalty0
  302--350, 2007.
\newblock ISSN 0024-6115.
\newblock URL \url{https://doi.org/10.1112/plms/pdl007}.

\bibitem[Kruckman(2016)]{kruckman}
A.~Kruckman.
\newblock Infinitary limits of finite structures.
\newblock \emph{PhD Thesis, University of California, Berkeley}, 2016.
\newblock {Available at
  \url{https://escholarship.org/content/qt2057n08h/qt2057n08h.pdf}}.

\bibitem[Sami(1994)]{Sami}
R.~L. Sami.
\newblock Polish group actions and the {V}aught conjecture.
\newblock \emph{Trans. Amer. Math. Soc.}, 341\penalty0 (1):\penalty0 335--353,
  1994.
\newblock ISSN 0002-9947.
\newblock URL \url{https://doi.org/10.2307/2154625}.

\bibitem[Tent and Ziegler(2012)]{TZ}
K.~Tent and M.~Ziegler.
\newblock \emph{A course in model theory}, volume~40 of \emph{Lecture Notes in
  Logic}.
\newblock Association for Symbolic Logic, La Jolla, CA; Cambridge University
  Press, Cambridge, 2012.
\newblock \doi{10.1017/CBO9781139015417}.
\newblock URL \url{http://dx.doi.org/10.1017/CBO9781139015417}.

\end{thebibliography}

\bibliographystyle{plainnat}

\end{document}